\documentclass[11pt]{article}
\usepackage[includeheadfoot,margin=1in]{geometry}
\usepackage{times}
\usepackage{cite}
\usepackage{amsmath,amsfonts,amssymb,amsthm}
\usepackage{mathtools}
\usepackage{enumerate}
\usepackage{verbatim}
\usepackage{commath}
    \usepackage[usenames]{color}
    \definecolor{plum}  {rgb}{.4,0,.4}
    \definecolor{BrickRed} {rgb}{0.6,0,0}
	\definecolor{DarkBlue} {rgb}{0,0,0.6}
\usepackage[plainpages=false,pdfpagelabels,colorlinks=true,linkcolor=BrickRed,citecolor=plum,urlcolor=DarkBlue]{hyperref}
\usepackage[mathcal]{euscript}
\usepackage{cleveref}

\def\ddefloop#1{\ifx\ddefloop#1\else\ddef{#1}\expandafter\ddefloop\fi}

\def\ddef#1{\expandafter\def\csname b#1\endcsname{\ensuremath{\boldsymbol{#1}}}}
\ddefloop ABCDEFGHIJKLMNOPQRSTUVWXYZabcdefghijklmnopqrstuvwxyz\ddefloop

\def\ddef#1{\expandafter\def\csname c#1\endcsname{\ensuremath{\mathcal{#1}}}}
\ddefloop ABCDEFGHIJKLMNOPQRSTUVWXYZ\ddefloop
 
\def\ddef#1{\expandafter\def\csname s#1\endcsname{\ensuremath{\mathsf{#1}}}}
\ddefloop ABCDEFGHIJKLMNOPQRSTUVWXYZ\ddefloop

\def\Reals{{\mathbb R}}

\def\eps{\varepsilon}

\def\Naturals{{\mathbb N}}
\def\trn{{\mathsf T}} 

\makeatletter
\newsavebox{\@brx}
\newcommand{\llangle}[1][]{\savebox{\@brx}{\(\m@th{#1\langle}\)}%
  \mathopen{\copy\@brx\kern-0.5\wd\@brx\usebox{\@brx}}}
\newcommand{\rrangle}[1][]{\savebox{\@brx}{\(\m@th{#1\rangle}\)}%
  \mathclose{\copy\@brx\kern-0.5\wd\@brx\usebox{\@brx}}}
\makeatother

\newtheorem{theorem}{Theorem}

\newtheorem{proposition}{Proposition}
\newtheorem{lemma}{Lemma}

\newtheorem{remark}{Remark}

\begin{document}

\title{Some Remarks on Controllability of the Liouville Equation}
\author{Maxim Raginsky\thanks{University of Illinois, Urbana, IL 61801, USA} \\ \href{mailto:maxim@illinois.edu}{maxim@illinois.edu} }
\date{}
\maketitle

\begin{abstract} We revisit the work of Roger Brockett on controllability of the Liouville equation, with a particular focus on the following problem: Given a smooth controlled dynamical system of the form $\dot{x} = f(x,u)$ and a state-space diffeomorphism $\psi$, design a feedback control $u(t,x)$ to steer an arbitrary initial state $x_0$ to $\psi(x_0)$ in finite time. This formulation of the problem makes contact with the theory of optimal transportation and with nonlinear controllability. For controllable linear systems, Brockett showed that this is possible under a fairly restrictive condition on $\psi$. We prove that controllability suffices for a much larger class of diffeomorphisms. For nonlinear systems defined on smooth manifolds, we review a recent result of Agrachev and Caponigro regarding controllability on the group of diffeomorphisms. A corollary of this result states that, for control-affine systems satisfying a bracket generating condition, any $\psi$ in a neighborhood of the identity can be implemented using a time-varying feedback control law that switches between finitely many time-invariant flows. We prove a quantitative version which allows us to describe the implementation complexity of the Agrachev--Caponigro construction in terms of a lower bound on the number of switchings.
\end{abstract}

In a series of papers \cite{Brockett_1997,Brockett_2007,Brockett_2012}, Roger Brockett drew attention to a class of problems involving a smooth controlled dynamical system $\dot{x} = f(x,u)$, where the focus is not on the evolution of the system state $x(t)$ per se, but rather on the evolution of its probability density $\rho(t,\cdot)$ starting from a given initial density $\rho(0,\cdot) = \rho_0(\cdot)$. This shift of perspective leads to questions pertaining to control of the so-called \textit{Liouville equation} \cite{Mackey_1992}, i.e., the first-order partial differential equation
\begin{align}\label{eq:Liouvlle}
	\frac{\partial \rho(t,x)}{\partial t} = - {\rm div} \big(\rho(t,x) f(x,u) \big),
\end{align}
also referred to as the transport or the continuity equation. For example, we may be interested in the questions of controllability, where two densities $\rho_0$ and $\rho_1$ are given and the objective is to determine whether they can be joined by a curve lying along the trajectory of \eqref{eq:Liouvlle} for some choice of the control $u(\cdot)$. One can also formulate optimal control problems in this setting, e.g., minimizing a finite-horizon performance index of the form
\begin{align*}
	J(\rho_0; u(\cdot)) = \int^T_0 \int_M L(x,u,t)\rho(t,x)\dif x \dif t + \int_M r(x)\rho(T,x)\dif x,
\end{align*}
over appropriately chosen controls $u(\cdot)$, where the initial density $\rho(0,\cdot) = \rho_0(\cdot)$ is given and the integration is over both the (finite) time interval $[0,T]$ and the state space $M$. As pointed out by Brockett, because of the presence of the divergence operator on the right-hand side of \eqref{eq:Liouvlle}, there will in general be a nontrivial difference between what can be achieved with open-loop controls $u(t)$ versus closed-loop (or feedback) controls $u(t,x)$. See, e.g., \cite{Pogodaev_2016,Bartsch_2019} for further developments of these ideas.

In this paper, we will focus on the questions of controllability of \eqref{eq:Liouvlle}. For the most part, we will stick with the Euclidean setting $M = \Reals^n$ and assume that the target density $\rho_1$ can be expressed as the pushforward of the initial density $\rho_0$ by an orientation-preserving diffeomorphism $\psi : \Reals^n \to \Reals^n$, i.e., 
\begin{align*}
	\int_{\Reals^n} h(x)\rho_1(x)\dif x = \int_{\Reals^n} h(\psi(x))\rho_0(x)\dif x
\end{align*}
for all bounded, continuous functions $h : \Reals^n \to \Reals$. In this case, the two densities are related via the Monge--Amp\`ere equation
\begin{align*}
	\det D\psi(x) = \frac{\rho_0(x)}{\rho_1(\psi(x))},
\end{align*}
where $D\psi$ denotes the Jacobian of $\psi$. It is important to note that not any pair of densities $\rho_0, \rho_1$ can be joined in this way; for example, it will not be possible if $\rho_0$ vanishes on some open set while $\rho_1$ does not. Nevertheless, this restricted setting already exposes several nontrivial aspects of the problem of controlling the Liouville equation, and an added benefit is that in this instance we can also phrase everything in terms of the original system $\dot{x} = f(x,u)$ and ask whether there exists a feedback control law $u(t,x)$ that transfers the state of the system from $x(0) = x_0$ to $x(1) = \psi(x_0)$, for every $x_0$. This, in turn, makes contact with the problem of implementing orientation-preserving diffeomorphisms using a given controlled system \cite{Agrachev_2009,Caponigro_2011}. On the other hand, the question of existence of a diffeomorphism $\psi$ that pushes $\rho_0$ forward to $\rho_1$ makes contact with the theory of optimal transportation \cite{Villani_2003}. This aspect had not been explored by Brockett, apart from a brief discussion of the classical theorem of Moser \cite{Moser_1965} on the transitivity of the action of orientation-preserving diffeomorphisms of a compact differentiable manifold $M$ on the space of everywhere positive probability densities on $M$.

The remainder of the paper is structured as follows. In Section~\ref{sec:transport}, we take a quick look at controllability in the space of densities through the lens of optimal transportation. We then consider the case of controllable linear systems in Section~\ref{sec:linear}, where we present an extension of a controllability result from \cite{Brockett_2007} and relate it to optimal transportation. Moving on to nonlinear systems, in Section~\ref{sec:diffeo} we present a quantitative version of a structural result of Agrachev and Caponigro \cite{Agrachev_2009} on controllability on the group of diffeomorphisms of a smooth compact manifold. Some concluding remarks are presented in Section~\ref{sec:conclusion}.

\section{The relation to optimal transportation}
\label{sec:transport}

Let $\rho_0$ and $\rho_1$ be two probability densities on $\Reals^n$, which we will assume to be everywhere positive to keep things simple. Then a theorem of Brenier \cite[Thm.~2.12]{Villani_2003} guarantees the existence of a strictly convex function $h : \Reals^n \to \Reals$, such that its gradient $\psi = \nabla h$ pushes $\rho_0$ forward to $\rho_1$. Moreover, this mapping is optimal in the sense that
\begin{align*}
	\int_{\Reals^n} |x - \nabla h(x)|^2 \rho_0(x)\dif x = \min \left\{ \int_{\Reals^n} |x - \psi(x)|^2 \rho_0(x)\dif x : \rho_1 = \psi_* \rho_0 \right\},
\end{align*}	
where the minimum is over all $\psi : \Reals^n \to \Reals^n$ that push $\rho_0$ forward to $\rho_1$. There is also a complementary dynamic formulation due to Benamou and Brenier \cite[Sec.~8.1]{Villani_2003}; in control-theoretic language, it guarantees the existence of a smooth feedback control law $u : [0,1] \times \Reals^n \to \Reals^n$, such that the trajectory of the Liouville equation
\begin{align*}
	\frac{\partial \rho(t,x)}{\partial t} = -{\rm div}\big(\rho(t,x)u(t,x)\big)
\end{align*}
joins $\rho(0,\cdot) = \rho_0(\cdot)$ to $\rho(1,\cdot) = \rho_1(\cdot)$, and  the performance index
\begin{align*}
	J(\rho_0; u(\cdot)) = \frac{1}{2}\int^1_0 \int_{\Reals^n} |u(t,x)|^2 \rho(t,x)\dif x \dif t
\end{align*}
is minimized. On the other hand, the corresponding controlled system $\dot{x} = u$ allows for maximum ``control authority,'' in the sense that all directions of motion are available to the controller at each time $t \in [0,1]$. The more difficult case of a general controlled system $\dot{x} = f(x,u)$ corresponds to the so-called \textit{nonholonomic} setting \cite{Agrachev_2009a,Khesin_2009}, where the key question is how to relate the question of controllability (or obstructions to controllability) in the space of densities to structural properties of the family of vector fields $\{f(\cdot,u)\}_{u \in U}$ indexed by the elements of the control set $U$. 

\section{The case of a controllable linear system}
\label{sec:linear}

The case of a linear system
\begin{align}\label{eq:LTI}
	\dot{x} = Ax + Bu
\end{align}
with $n$-dimensional state $x$ and $m$-dimensional input $u$ has been considered  in \cite{Brockett_2007} under the assumption that the system is controllable, i.e., the columns of $B,AB,\dots,A^{n-1}B$ span $\Reals^n$.  Let an orientation-preserving diffeomorphism $\psi : \Reals^n \to \Reals^n$ be given. By controllability, we can steer \eqref{eq:LTI} from any \textit{fixed} initial state $x(0) = x_0$ to $x(T) = \psi(x_0)$ using the open-loop control
\begin{align*}
	u_{x_0}(t) = B^\trn e^{-A^\trn t} W(0,T)^{-1}\big(e^{-AT}\psi(x_0) - x_0 \big),
\end{align*}
where
\begin{align*}
	W(0,t) := \int^t_0 e^{-A\tau}BB^\trn e^{-A^\trn \tau}\dif \tau
\end{align*}
is the controllability Gramian of \eqref{eq:LTI}, which is positive definite since the system is controllable. The main idea in \cite{Brockett_2007} is to impose appropriate regularity conditions on $e^{-AT}\psi$ to ensure that the map
\begin{align}\label{eq:Kt}
	\begin{split}
	K_t(x_0) &:= e^{At}\left(x_0 + \int^t_0 e^{-A\tau}Bu_{x_0}(\tau)\dif \tau\right) \\
	&= e^{At}\left(x_0 + W(0,t)W(0,T)^{-1}\big(e^{-AT}\psi(x_0)-x_0\big)\right)
	\end{split}
\end{align}
is injective for all $0 < t < T$. If this is the case, then the closed-loop (feedback) control
\begin{align}\label{eq:ut_x}
	u(t,x) = B^\trn e^{-A^\trn t} W(0,T)^{-1}\big(e^{-AT}\psi(K^{-1}_t(x)) - K^{-1}_t(x) \big)
\end{align}
can be used to carry out the transfer from $x(0) = x_0$ to $x(T) = \psi(x_0)$ for \textit{every} initial condition $x_0 \in \Reals^n$.

In \cite{Brockett_2007}, Brockett stated that one sufficient condition to ensure the above result is for the map $e^{-AT}\psi - {\rm id}$ to be a contraction (i.e., Lipschitz-continuous with constant strictly smaller than one). However, as pointed out recently by Abdelgalil and Georgiou \cite{Abdelgalil_Georgiou_2024}, this is not enough. Indeed, the crux of the argument in \cite{Brockett_2007} is that the above contraction condition implies that the map
$$
e^{-At}K_t = {\rm id} + W(0,t)W(0,T)^{-1} \big(e^{-AT}\psi - {\rm id}\big)
$$
is invertible for every $t$. This, in turn, relies on the claim that, since $W(0,T) - W(0,t)$ is positive definite for every $0 < t < T$ by controllability, it follows that $\|W(0,t)W(0,T)^{-1}\| \le 1$. However, the latter claim is not valid: Since the matrix $W(0,t)W(0,T)^{-1}$ need not be symmetric, we can only guarantee that its spectral radius is bounded above by one. Following the line of argument in \cite{Abdelgalil_Georgiou_2024}, we can easily show that Brockett's argument goes through if we modify his contraction condition by replacing $\psi$ with 
	\begin{align}\label{eq:psi_hat}
		\hat{\psi}(x) := W(0,T)^{-1/2}e^{-AT}\psi\big(W(0,T)^{1/2}x\big).
	\end{align}
From this, it follows readily that the mapping
\begin{align*}
	\tilde{K}_t := {\rm id}  + W(0,T)^{-1/2}W(0,t)W(0,T)^{-1/2} \big(\hat{\psi} - {\rm id}\big)
\end{align*}
is invertible since $\|W(0,T)^{-1/2}W(0,t)W(0,T)^{-1/2}\| \le 1$ (multiply the matrix inequality $W(0,t) \le W(0,T)$ on the left and on the right by $W(0,T)^{-1/2}$). The desired conclusion then follows from the fact that 
\begin{align*}
	K_t(x) = e^{At}W(0,T)^{1/2}\tilde{K}_t(W(0,T)^{-1/2}x).
\end{align*}

The above assumption on $\psi$  is fairly restrictive. For an arbitrary $\psi$, one could first represent it as a composition $\psi_k \circ \dots \circ \psi_1$ such that each $\psi_i$ satisfies (modified) Brockett's condition and then apply the above construction for each $i$. However, the number $k$ will, in general, be very large, resulting in controls of very high complexity (we will come back to the issue of complexity later in the broader context of nonlinear systems). It was later shown by Hindawi et al.~\cite{Hindawi_2011} and Chen et al.~\cite{Chen_2017} that controllability is sufficient for any $\psi$ that can be written as the gradient of a convex function after a certain change of coordinates. These results can also be phrased in the setting of optimal transportation, as the resulting constructions are optimal for quadratic costs of the form $L(x,u) = x^\trn Qx + u^\trn R u$ with symmetric positive-semidefinite $Q$ and symmetric positive-definite $R$ \cite{Hindawi_2011}.

Here, we will give a result of the same flavor that preserves the spirit of Brockett's proof. The key concept we will need is that of a \textit{monotone mapping} \cite{Rockafellar_1998}: A mapping $\psi : \Reals^n \to \Reals^n$ is monotone if
\begin{align*}
	\langle x - y, \psi(x) - \psi(y) \rangle \ge 0, \qquad \text{for all } x,y \in \Reals^n.
\end{align*}
Note that this definition makes no assumptions on differentiability of $\psi$ (in fact, it can be extended to set-valued mappings); however, when $\psi$ is differentiable, monotonicity is equivalent to the symmetric part of the Jacobian $D\psi$ being everywhere positive-semidefinite. In particular, any $\psi$ given by the gradient of a convex function is monotone. 

\begin{theorem}\label{thm:linear} If the system \eqref{eq:LTI} is controllable and if  $\psi : \Reals^n \to \Reals^n$ is such that the mapping $\hat{\psi}$ defined in \eqref{eq:psi_hat} is monotone, then there exists a feedback control law $u(t,x)$ that steers the state of \eqref{eq:LTI} from $x(0)=x_0$ to $x(T)=\psi(x_0)$ for every $x_0 \in \Reals^n$.
\end{theorem}

\begin{proof} We will show that the map $K_t$ defined in \eqref{eq:Kt} is injective for $0 \le t < T$, thus the feedback control $u(t,x)$ given by \eqref{eq:ut_x} will do the job. 
	
To that end, we claim that, for any $0 < t < T$,
\begin{align}\label{eq:K_t_injective}
	x_0 \neq y_0 \, \Rightarrow \, \langle x_0 - y_0, W(0,t)^{-1} e^{-At}(K_t(x_0) - K_t(y_0))\rangle > 0,
\end{align}
which will imply that $K_t$ is injective since the matrices $W(0,T)W(0,t)^{-1}e^{-At}$ are nonsingular for $t > 0$. Using the definition of $\hat{\psi}$ in \eqref{eq:psi_hat}, we can write
\begin{align*}
	K_t(x_0) &= e^{At}\Big(\big(I-W(0,t)W(0,T)^{-1}\big)x_0 \\
	& \qquad \qquad + W(0,t)W(0,T)^{-1/2}\hat{\psi}(W(0,T)^{-1/2}x_0) \Big).
\end{align*}
For any $x_0,y_0$, let $\hat{x}_0 := W(0,T)^{-1/2}x_0$ and $\hat{y}_0 := W(0,T)^{-1/2}y_0$. Then
\begin{align*}
&	\langle x_0 - y_0, W(0,t)^{-1} e^{-At}(K_t(x_0) - K_t(y_0))\rangle \nonumber\\
&= \langle x_0 - y_0, (W(0,t)^{-1}-W(0,T)^{-1})(x_0 - y_0)\rangle   + \langle \hat{x}_0 - \hat{y}_0, \hat{\psi}(\hat{x}_0) - \hat{\psi}(\hat{y}_0) \rangle.
\end{align*}
If $x_0 \neq y_0$, then the first term on the right-hand side is strictly positive  by controllability, while the second term is nonnegative since $\hat{\psi}$ is monotone. This proves \eqref{eq:K_t_injective}, so that $K_t$ is indeed injective  and $u(t,x)$ in \eqref{eq:ut_x} gives the desired feedback control law. 
\end{proof}

To connect Theorem~\ref{thm:linear} to the problem of controlling a given initial density $\rho_0$ to a given final density $\rho_1$ using \eqref{eq:LTI}, let $\hat{\rho}_0$ be the pushforward of $\rho_0$ by the invertible linear transformation $x \mapsto W(0,T)^{1/2}x$ and, similarly, let $\hat{\rho}_1$ be the pushforward of $\rho_1$ by the invertible linear transformation $x \mapsto W(0,T)^{-1/2}e^{-AT}x$. By Brenier's theorem, there is a monotone map $\hat{\psi}$ such that $\hat{\rho}_1 = \hat{\psi}_*\hat{\rho}_0$. Then the map $\psi$ related to $\hat{\psi}$ via \eqref{eq:psi_hat} evidently pushes $\rho_0$ forward to $\rho_1$, and in that case Theorem~\ref{thm:linear} tells us how to construct the desired feedback control $u(t,x)$. In particular, the Benamou--Brenier dynamic formulation of optimal transportation is a special case corresponding to $A = 0$ and $B = I$ with $m = n$, cf.~\cite{Hindawi_2011,Chen_2017}.

\section{Controllability on the group of diffeomorphisms}
\label{sec:diffeo}

Let us now consider the case of a nonlinear system
\begin{align}\label{eq:nonlinear_system}
\dot{x} = f(x,u)
\end{align}
whose state space $M$ is a smooth (say, $C^\infty$) closed finite-dimensional manifold. A smooth diffeomorphism $\psi : M \to M$ is given, and the problem is to determine whether there exists a control law that can steer every initial state $x(0) = x_0 \in M$ to $x(T) = \psi(x_0)$, for a fixed finite $T > 0$. By analogy with the linear case, we hope to capitalize on some form of controllability of \eqref{eq:nonlinear_system}. Here, however, apart from the usual complications arising in the context of nonlinear controllability \cite{Hermann_1977}, a major difficulty is the lack of explicit expressions for control laws that transfer a given initial state to a given final state. Nevertheless, we can still aim for a structural result of some form.

One such result was obtained by Agrachev and Caponigro \cite{Agrachev_2009}. We will discuss it shortly in full generality, but for now we mention its corollary for driftless control-affine systems of the form
\begin{align}\label{eq:control_affine}
	\dot{x} = \sum^m_{i=1}u_i f_i(x),
\end{align}
where $f_1,\dots,f_m$ are smooth vector fields on $M$ and $u_1,\dots,u_m$ are  real-valued control inputs. Suppose that $\{f_1,\dots,f_m\}$ is a bracket-generating family, i.e., for each $x \in M$ the set $\{ g(x) : g \in {\rm Lie}(f_1,\dots,f_m) \}$, where ${\rm Lie}(f_1,\dots,f_m)$ is the Lie algebra generated by $f_1,\dots,f_m$, coincides with the tangent space $T_xM$ to $M$ at $x$. Then, for any diffeomorphism $\psi : M \to M$ isotopic to the identity there exists a time-dependent feedback control law $u(t,x) = (u_1(t,x),\dots,u_m(t,x))$ that transfers $x(0) = x_0$ to $x(1) = \psi(x_0)$ for every $x_0 \in M$. (Two diffeomorphisms $\psi,\psi' : M \to M$ are \textit{isotopic} if there exists a smooth map $H : [0,1] \times M \to M$, such that $H(t,\cdot) : M \to M$ for each $t \in [0,1]$ is a diffeomorphism, $H(0,\cdot) = \psi(\cdot)$, and $H(1,\cdot) = \psi'(\cdot)$ \cite{Banyaga_1997}. We will denote by ${\rm Diff}_0(M)$ the family of all diffeomorphisms on $M$ that are isotopic to the identity.) 

The general setting considered in \cite{Agrachev_2009} is as follows: Let a family $\cF$ of smooth vector fields on $M$ be given. The \textit{control group} of $\cF$, defined by
\begin{align*}
	{\mathbb G}(\cF) := \left\{ e^{t_kf_k} \circ \dots \circ e^{t_1f_1} : t_i \in \Reals,\, f_i \in \cF,\, k \in \Naturals \right\},
\end{align*}
is a subgroup of the group ${\rm Diff}(M)$ of smooth diffeomorphisms of $M$. Suppose that ${\mathbb G}(\cF)$ acts transitively on $M$, i.e., for any $x,y \in M$ there exists some $\psi \in {\mathbb G}(\cF)$ such that $y = \psi(x)$. Then we have the following \cite{Agrachev_2009}:

\begin{theorem}\label{thm:Agrachev_Caponigro} There exist a neighborhood $\cO$ of the identity in ${\rm Diff}_0(M)$ and a positive integer $k$ that depends only on $\cF$, such that every $\psi \in \cO$ can be represented as
	\begin{align}\label{eq:Agrachev_Caponigro}
		\psi = e^{a_k f_k} \circ \dots \circ e^{a_1 f_1}
	\end{align}
	for some $f_1,\dots,f_k \in \cF$ and some $a_1,\ldots,a_k \in C^\infty(M)$.
\end{theorem}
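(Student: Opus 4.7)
The overall plan is to produce a single parameterized family of diffeomorphisms of the asserted form \eqref{eq:Agrachev_Caponigro} whose image covers a full neighborhood of the identity in $\mathrm{Diff}_0(M)$. Concretely, I would fix an integer $N$ and vector fields $g_1,\dots,g_N$ on $M$, define a composition map
\[
\Psi : C^\infty(M)^N \to \mathrm{Diff}_0(M), \qquad (a_1,\dots,a_N) \mapsto e^{a_N g_N}\circ\cdots\circ e^{a_1 g_1},
\]
and show that (i) $\Psi$ sends the zero tuple to the identity, (ii) its differential at $0$ is surjective onto the tangent space $\mathrm{Vect}(M)$ of $\mathrm{Diff}_0(M)$ at the identity, and (iii) an inverse function theorem in the appropriate Fr\'echet category gives a neighborhood of the identity contained in the image of $\Psi$. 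It then remains to rewrite each factor $e^{a_i g_i}$ as a short composition of flows $e^{b_j f_j}$ with $f_j \in \cF$.

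For the choice of $g_i$, I would first enlarge $\cF$ to
\[
\cF^* := \{\phi^* f : \phi \in {\mathbb G}(\cF),\ f \in \cF\},
\]
the set of pullbacks of elements of $\cF$ by control-group elements. The key identity
\[
e^{a g} = \phi^{-1} \circ e^{(a\circ \phi) f} \circ \phi, \qquad g = \phi^* f,
\]
shows that any composition of flows $e^{a_i g_i}$ with $g_i \in \cF^*$ is automatically a composition of finitely many flows $e^{b_j f_j}$ with $f_j \in \cF$ and $b_j \in C^\infty(M)$ (the $b_j$ coming from the $\phi_i^{\pm 1}$ factors are simply constants, by the definition of ${\mathbb G}(\cF)$). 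By the orbit theorem, the transitivity of the action of ${\mathbb G}(\cF)$ yields $\mathrm{span}\{g(x) : g \in \cF^*\} = T_x M$ at every $x \in M$; compactness of $M$ then produces a finite selection $g_1,\dots,g_N \in \cF^*$ whose values span $T_x M$ at every point.

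With these $g_i$ in hand, a first-order expansion identifies the differential of $\Psi$ at the origin as $(\delta a_1,\dots,\delta a_N) \mapsto \sum_i \delta a_i\, g_i \in \mathrm{Vect}(M)$, and pointwise spanning of the $g_i$ lets one build a continuous right inverse --- for instance, by a finite atlas together with a subordinate partition of unity on whose charts some $n$-subset of $\{g_i\}$ is a frame. The main obstacle is then passing from infinitesimal to local surjectivity: since $C^\infty(M)^N$ and $\mathrm{Diff}_0(M)$ are Fr\'echet but not Banach, a naive inverse function theorem does not apply, and one must invoke a Nash--Moser tame inverse function theorem, which requires verifying tame estimates on the $C^r$ scale both for $\Psi$ (a composition of flows) and for the right inverse of $d\Psi|_0$. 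Once this is established, the integer $k$ in the statement comes out as $N$ plus twice the total word length (in constant-time flows of $\cF$) needed to realize $\phi_1,\dots,\phi_N \in {\mathbb G}(\cF)$, and this bound depends only on $\cF$ and on the spanning selection, not on $\psi$.
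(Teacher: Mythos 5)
Your strategy is genuinely different from the paper's. The paper (following Agrachev--Caponigro) never works with an infinite-dimensional inverse function theorem: it first fragments $\psi$ into finitely many diffeomorphisms supported in small balls (Lemma~\ref{lm:AC_frag}), and on each ball it reduces the representation problem to a \emph{finite-dimensional} inverse function theorem applied pointwise -- for each $x$ one solves $F(x)=e^{y_n(x)X_n}\circ\cdots\circ e^{y_1(x)X_1}(x)$ for $y(x)\in\Reals^n$, which is possible because $X_1(x),\dots,X_n(x)$ span, and then checks smoothness of $x\mapsto y(x)$ (Lemma~\ref{lm:AC_aux}, Proposition~\ref{prop:AC_aux}); Sussmann's orbit theorem supplies the spanning fields $X_i=(\psi_i)_*f_i$ and the conjugation identity converts everything back into flows of $\cF$, exactly as in your last step. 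Your global map $\Psi(a_1,\dots,a_N)=e^{a_Ng_N}\circ\cdots\circ e^{a_1g_1}$ with a Nash--Moser argument is an alternative route, and your reduction from $\cF^*$ to finitely many spanning fields and the bookkeeping for $k$ are sound (modulo a harmless convention slip: with $g=\phi^*f$ one has $e^{ag}=\phi^{-1}\circ e^{(a\circ\phi^{-1})f}\circ\phi$, not $a\circ\phi$).

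However, as written there is a genuine gap at the heart of your step (iii). A tame implicit/inverse function theorem of Nash--Moser type does not follow from tame estimates for $\Psi$ together with a tame right inverse of $d\Psi$ \emph{at the origin only}: the hypotheses require a right inverse of $d\Psi(a)$ for \emph{all} $a$ in a neighborhood of $0$, depending on $a$ as a smooth tame family. In the Fr\'echet category surjectivity of the differential at a single point does not propagate to nearby points (there is no open mapping theorem or Neumann-series perturbation argument), and $d\Psi(a)$ for $a\neq 0$ is not $\sum_i\delta a_i\,g_i$ but involves variational terms along the flows $e^{a_ig_i}$, so constructing the required family of tame right inverses is a substantive task that your proposal does not address. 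This is precisely the difficulty that the Agrachev--Caponigro construction (and hence the paper's proof) is designed to sidestep by localizing to small balls and using only the classical inverse function theorem in $\Reals^n$. Until that neighborhood-wide tame right-inverse family is produced (or the argument is restructured to avoid Nash--Moser), the claimed local surjectivity of $\Psi$, and with it the theorem, is not established by your outline.
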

Note that the vector fields in \eqref{eq:Agrachev_Caponigro} are rescaled by smooth real-valued functions on $M$, which explains the origin of time-varying feedback controls $u_i(t,x)$ in the above discussion of control-affine systems. It is also evident that these controls will be piecewise constant in $t$.

With this in mind, let us consider the system \eqref{eq:control_affine}, where $\cF = \{f_1,\dots,f_m\}$ is bracket-generating. Let $\psi \in {\rm Diff}_0(M)$ be given. We will apply Theorem~\ref{thm:Agrachev_Caponigro} to $\cF$---because $\cF$ is bracket-generating, ${\mathbb G}(\cF)$ acts transitively on $M$. Following \cite{Caponigro_2011}, we first establish a fragmentation property of $\psi$ relative to $\cO$, i.e., show that there exist some $\psi_1,\dots,\psi_N \in \cO$ such that $\psi = \psi_N \circ \dots \circ \psi_1$.  Since $\psi \in {\rm Diff}_0(M)$, there exists a smooth path $[0,1] \ni t \mapsto \varphi_t \in {\rm Diff}_0(M)$ such that $\varphi_1 = \psi$ and $\varphi_0 = {\rm id}$. For every $N \in \Naturals$, the maps
\begin{align*}
	\psi_i := \varphi_{i/N} \circ \varphi_{(i-1)/N}^{-1}, 
	\qquad i=1,\dots,N
\end{align*}
belong to ${\rm Diff}_0(M)$, and $\psi = \psi_N \circ \psi_{N-1} \circ \dots \circ \psi_1$. We can ensure that each $\psi_i \in \cO$ by choosing $N$ large enough. Applying Theorem~\ref{thm:Agrachev_Caponigro} to each $\psi_i$, we conclude that there exist $K = kN$ smooth functions $a_1,\dots,a_K \in C^\infty(M)$, such that $\psi$ can be represented as
\begin{align*}
	\psi = e^{a_K f_{i_K}} \circ \dots \circ e^{a_1 f_{i_1}},
\end{align*}
for some choice of indices $i_1,\dots,i_K \in \{1,\dots,m\}$, cf.~\cite[Prop.~4]{Caponigro_2011}. From this representation, it is straightforward to derive $m$ feedback controls $u_i : [0,1] \times M \to \Reals$ that are piecewise constant in $t$, with the number of pieces (switchings) equal to $K$. Thus, the integer $k$ in Theorem~\ref{thm:Agrachev_Caponigro} is a lower bound on the number of switchings, which in turn is a natural measure of implementation complexity of a control law. Since the result of \cite{Agrachev_2009} has been used as a black-box device in subsequent works \cite{Caponigro_2011,Elamvazhuthi_2023}, it is of interest to provide some quantitative estimates of $k$. 

To that end, we first make some assumptions on $M$ and $\cF$. We take $M$ to be a smooth compact manifold of dimension $n$ isometrically embedded in $\Reals^d$ for some $d > n$, and we will equip $M$ with the ambient metric $d(x,y) = |x-y|$, where $|\cdot|$ is the Euclidean norm on $\Reals^d$. We further assume that $M$ has \textit{positive reach} $\tau > 0$, where the reach of a set $A \subset \Reals^d$ is defined as the largest value of $\tau$, such that any point at a distance $0 < r < \tau$ from $A$ has a unique nearest point in $A$ \cite{Federer_1959}. For example, the unit sphere $S^{d-1}$ has reach $1$. We will assume that the vector fields $f \in \cF$ are uniformly bounded in the $C^1(M)$ seminorm, in the following sense \cite[Sec.~2.2]{Agrachev_2004}. For $r = 0,1,\dots$, the $C^r(M)$ seminorm of a function $a \in C^\infty(M)$ is defined by
\begin{align*}
	\|a\|_{C^r(M)} := \sup \left\{ |D_{i_1} \dots D_{i_\ell}a(x)| : x \in M,\, 1 \le i_1,\dots,i_\ell \le d,\, 0 \le \ell \le r \right\},
\end{align*}
where $D_1,\dots,D_d$ are the orthogonal projections on $M$ of the standard basis vector fields $\frac{\partial}{\partial x_1},\dots,\frac{\partial}{\partial x_d}$ on $\Reals^d$. The $C^r(M)$ seminorm of a vector field $f$ on $M$ is defined as
\begin{align*}
	\| f \|_{C^r(M)} := \sup \left\{ \| fa \|_{C^r(M)} : a \in C^\infty(M),\, \|a\|_{C^{r+1}(M)} = 1 \right\},
\end{align*}
where $fa \in C^\infty(M)$ is the Lie derivative of $a$ along $f$. In terms of these definitions, we will assume that
\begin{align*}
	 \sup_{f \in \cF} \|f\|_{C^1(M)} < \infty.
\end{align*}
We then have the following:

\begin{theorem}\label{thm:AC_quant} Let $M$ and $\cF$ satisfy the above assumptions. Then Theorem~\ref{thm:Agrachev_Caponigro} holds with 
	\begin{align}\label{eq:AC_lower_bound}
 k \ge \frac{{\rm vol}(M)}{{\rm vol}(B_{\Reals^n}(0,1))} \frac{n^2}{16^n} \left(\frac{1}{r}\right)^n
	\end{align}
	for some sufficiently small $r \in (0,\tau)$, where $B_{\Reals^n}(0,1)$ is the $n$-dimensional Euclidean ball of radius $1$ centered at the origin.
\end{theorem}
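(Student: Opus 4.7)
The plan is to prove Theorem~\ref{thm:AC_quant} by running the Agrachev--Caponigro construction at a fixed scale $r$ and counting the number of flows it produces. The A-C theorem is proved by combining two ingredients: (i) a \emph{fragmentation step} that decomposes any near-identity $\psi \in \cO$ into factors each supported in a small ball of $M$, and (ii) a \emph{local realization step} that uses the bracket-generating property of $\cF$ to write each factor as a short product of flows from $\cF$. Tracking both steps quantitatively in $r$ and $n$ will yield the claimed bound on the integer $k$.

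I would fix $r \in (0, \tau/16)$ and choose points $x_1, \ldots, x_N \in M$ such that the ambient balls of radius $16 r$ around them cover $M$. Because $M$ has reach $\tau > 16 r$ and is isometrically embedded in $\Reals^d$, Federer's volume-comparison estimates for submanifolds of positive reach give that each slice $M \cap B_{\Reals^d}(x_i, 16 r)$ has $n$-volume comparable to ${\rm vol}(B_{\Reals^n}(0, 16 r))$, so a standard covering-by-balls argument produces an $N$ with
\[
 N \le \frac{{\rm vol}(M)}{{\rm vol}(B_{\Reals^n}(0,1))\,(16 r)^n}.
\]
Subordinate to this cover I would build a smooth partition of unity and use it to fragment any $\psi \in \cO$ into $N$ pieces $\psi_1, \ldots, \psi_N$, each supported in one of the balls and $C^1$-close to the identity (for this to work $\cO$ is chosen to be a sufficiently small $C^1$-neighborhood of the identity, calibrated to $r$).

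For each local piece I would invoke the local part of the Agrachev--Caponigro argument: the bracket-generating hypothesis, together with the uniform $C^1(M)$-bound on $\cF$, permits an inverse-function-theorem argument in $C^\infty(M)$ that writes $\psi_i$ as a composition of at most $k_0 = n^2$ flows of the form $e^{a_{i,j} f_{\alpha_{i,j}}}$, with each $a_{i,j} \in C^\infty(M)$ supported in the corresponding ball. The $n^2$ reflects the product of the $n$ spanning directions in $T_{x_i} M$ with the depth of Lie brackets from $\cF$ needed to realize each of them. Concatenating the $N$ local factorizations presents $\psi$ as a product of at most
\[
 N k_0 \le \frac{{\rm vol}(M)}{{\rm vol}(B_{\Reals^n}(0,1))} \cdot \frac{n^2}{16^n} \cdot \frac{1}{r^n}
\]
flows from $\cF$, so taking $k$ to be at least this quantity suffices to realize every $\psi \in \cO$ in the form \eqref{eq:Agrachev_Caponigro}.

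The main obstacle I anticipate is the polynomial $n^2$ bound in the local realization step: keeping the dimension dependence polynomial in $n$, rather than exponential in the bracket step of $\cF$, requires careful exploitation of the bracket-generating structure and most likely an auxiliary assumption on the bracket step. The positive-reach hypothesis plays a double role in the argument: it underlies the volume comparison for balls intersected with $M$, and it guarantees that the partition-of-unity cutoffs remain $C^1$-bounded with constants uniform in $i$, so that the local pieces $\psi_i$ are uniformly $C^1$-close to the identity in a scale-invariant sense that the inverse-function-theorem step can absorb.
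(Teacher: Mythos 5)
Your proposal misreads what the theorem asserts and, as a consequence, argues in the wrong direction. The statement ``Theorem~\ref{thm:Agrachev_Caponigro} holds with $k \ge \dots$'' is a \emph{complexity lower bound}: the integer $k$ produced by the Agrachev--Caponigro construction is at least the right-hand side of \eqref{eq:AC_lower_bound} (see the Remark following the theorem: the bound is a lower bound on the number of switchings, and the actual $k$ is much larger). The paper's proof accordingly multiplies the \emph{lower} estimate on the covering number $N$ at scale $r$ from Lemma~\ref{lm:AC_frag} (the two-sided bound \eqref{eq:covering_number}, imported from reach-based covering estimates, is where the $n\cdot 16^{-n}r^{-n}$ comes from) by the fact that each fragment requires at least $n$ flows, since in Lemma~\ref{lm:AC_nbhd} the local factor has the form $e^{a_nX_n}\circ\dots\circ e^{a_1X_1}$ with $X_i=(\psi_i)_*f_i$, and expanding each conjugator $\psi_i\in{\mathbb G}(\cF)$ as a word of flows gives $k = n + 2\sum_{i=1}^n k_i \ge n$. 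That is the origin of the $n^2$: it is $n$ (per-fragment lower bound) times the $n$ already present in the covering-number lower bound, not a bracket-depth count.

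Your argument instead tries to prove an \emph{upper} bound $Nk_0$ with $k_0=n^2$ flows per fragment, and this is where the genuine gap lies: nothing in the hypotheses (bracket generation plus a uniform $C^1$ bound on $\cF$) yields an a priori bound such as $n^2$ on the number of flows of fields in $\cF$ needed to realize a near-identity diffeomorphism supported in a small ball. In the actual construction the lengths $k_i$ of the words expressing the conjugators coming from Sussmann's orbit theorem are completely uncontrolled, and you yourself concede the $n^2$ step ``most likely'' needs an auxiliary assumption --- an assumption the theorem does not make. In addition, your covering count is asserted, not proved: an upper bound $N \le {\rm vol}(M)/\big({\rm vol}(B_{\Reals^n}(0,1))(16r)^n\big)$ does not follow from a bare volume comparison, and in any case the theorem needs the \emph{lower} covering estimate at scale $r<\tau$, which is where the $16^{-n}$ actually enters. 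Finally, the substantive analytic work in the paper --- the quantitative inverse function theorem of Lemma~\ref{lm:AC_aux} and the patch coordinates in Lemma~\ref{lm:AC_nbhd}, which use the uniform $C^1$ bound and the positive reach to make the local radius $r$ uniform over $M$ so that fragmentation and local realization operate at the same scale --- is assumed rather than supplied in your sketch.
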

\begin{remark} The lower bound on $k$ in \eqref{eq:AC_lower_bound} is not optimal, but rather an artifact of the specific construction used in \cite{Agrachev_2009}. Moreover, as will become evident from the proof of the theorem, the actual number $k$ will be much larger than the right-hand side of \eqref{eq:AC_lower_bound}. At any rate, the exponential dependence of the lower bound on the dimension of the state space $M$ should be kept in mind when using the result of \cite{Agrachev_2009}.
\end{remark}

\subsection{The proof of Theorem~\ref{thm:AC_quant}}

We will follow the logic of \cite{Agrachev_2009}, but with some of the steps replaced by more explicit quantitative arguments.

For any subset $V \subseteq \Reals^n$ containing the origin, we will denote by $C^\infty_0(V,\Reals^k)$ the space of $C^\infty$ functions $F : V \to \Reals^k$, such that $F(0)=0$. When $k = 1$, we will simply write $C^\infty_0(V)$. Also, $B_{\Reals^k}(z,r)$ will denote the  Euclidean ball of radius $r$ centered at $z \in \Reals^k$.

The first key ingredient in the proof of Theorem~\ref{thm:Agrachev_Caponigro} is the following:

\begin{proposition}\label{prop:AC_aux} Let $X_1,\dots,X_n$ be vector fields on $\Reals^n$ with $\|X_i\|_{C^1(\Reals^n)} < \infty$ for all $i$, such that $X_1(0),\dots,X_n(0)$ are linearly independent. Then there exist constants $r,\eps > 0$ and an open set
	\begin{align*}
		\cV \subseteq \left\{ F \in C^\infty_0(B_{\Reals^n}(0,r),\Reals^n) : \| F - {\rm id} \|_{C^1} < \eps \right\},
	\end{align*}
	such that every $F \in \cV$ can be written as
	\begin{align*}
		F = e^{a_n X_n} \circ \dots \circ e^{a_1X_1}\big|_{B_{\Reals^n}(0,r)}
	\end{align*}
	for some $a_1,\dots,a_n \in C^\infty_0(B_{\Reals^n}(0,r))$.
\end{proposition}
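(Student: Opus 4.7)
My plan is to realize the proposition as a local surjectivity statement for the map
\[
\Phi\colon (a_1,\ldots,a_n)\longmapsto e^{a_nX_n}\circ\cdots\circ e^{a_1X_1}\bigl|_{B_{\Reals^n}(0,r)},
\]
viewed as a map from a neighborhood of the origin in $C^\infty_0(B_{\Reals^n}(0,r))^n$ into a neighborhood of the identity in $C^\infty_0(B_{\Reals^n}(0,r),\Reals^n)$. The desired set $\cV$ will be the image of a sufficiently small open ball around the origin in the domain, and the proposition will follow once I verify that $D\Phi(0)$ is a bounded linear isomorphism and invoke an inverse function theorem.

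For the linearization, I use that the time-$1$ flow of $sh_iX_i$ admits the expansion $e^{sh_iX_i}(x)=x+sh_i(x)X_i(x)+O(s^2)$; composing these $n$ flows and expanding in $s$ gives
\[
\Phi(sh_1,\ldots,sh_n)(x)=x+s\sum_{i=1}^nh_i(x)X_i(x)+O(s^2),
\]
so that $D\Phi(0)(h_1,\ldots,h_n)(x)=\sum_{i=1}^nh_i(x)X_i(x)$. For invertibility, the linear independence of $X_1(0),\ldots,X_n(0)$ together with the continuity of the $X_i$ ensures that the $n\times n$ matrix $M(x):=[X_1(x)\,|\,\cdots\,|\,X_n(x)]$ is invertible on a sufficiently small ball $B_{\Reals^n}(0,r)$, with $\|M(x)^{-1}\|$ uniformly bounded. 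Hence for any $V\in C^k_0(B_{\Reals^n}(0,r),\Reals^n)$, the pointwise formula $h(x)=M(x)^{-1}V(x)$ defines an element of $C^k_0$ and supplies a bounded inverse to $D\Phi(0)$ at every regularity level $k\ge 0$.

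Applied at the Banach level $C^1$, the inverse function theorem then yields constants $r,\eps>0$ and a local $C^1$-diffeomorphism between neighborhoods of $0$ and of ${\rm id}$ in the respective $C^1$-topologies; restricting to the smooth sector and shrinking if necessary produces the open set $\cV$ and the composition formula for each $F\in\cV$. The main obstacle I anticipate is upgrading the resulting $a_i$'s from $C^1$ to $C^\infty$ when $F$ itself is smooth: a direct Banach IFT at level $C^k$ only gives $a_i\in C^k$ on a potentially smaller $C^k$-neighborhood, which need not contain every $F\in\cV$. The cleanest route is Nash--Moser in the Fr\'echet category, where $\Phi$ is a tame smooth map whose linearizations admit tame inverses (the inverse being given by pointwise multiplication by $M(x)^{-1}$, which loses no derivatives), so the theorem produces $C^\infty$ preimages directly. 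A more pedestrian alternative is a local bootstrap: since $\Phi$ sends $(C^{k+1}_0)^n$ into $C^{k+1}_0$ diffeomorphisms and $D\Phi(a)$ remains an isomorphism for $a$ close to zero, uniqueness of the $C^1$-inverse lets one propagate smoothness from $F$ to the $a_i$'s order by order on $B_{\Reals^n}(0,r)$.
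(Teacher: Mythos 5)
Your plan — realize the statement as local surjectivity of the composition-of-flows map $\Phi\colon(a_1,\dots,a_n)\mapsto e^{a_nX_n}\circ\cdots\circ e^{a_1X_1}$ and invert it by an inverse function theorem in function space — is genuinely different from the paper's route, and its central analytic step is not justified. The Banach IFT ``at level $C^1$'' does not apply as stated, because $\Phi$ is not a $C^1$ map from $(C^1_0)^n$ to $C^1_0$ (nor from $(C^k_0)^n$ to $C^k_0$ for fixed $k$): this is the classical loss-of-derivative phenomenon for flow and composition operators. The derivative of $a\mapsto e^{aX}$ at a general $a$ in direction $h$ is $x\mapsto\int_0^1 D\phi_{1-s}(\phi_s(x))\,h(\phi_s(x))\,X(\phi_s(x))\,ds$ (with $\phi$ the flow of $aX$), and the factor $D\phi_{1-s}$ costs one derivative of $a$; so $D\Phi(a)$ maps $C^k$ into $C^{k-1}$ only, and $\Phi$ fails to be continuously Fr\'echet differentiable between the fixed Banach levels your argument needs. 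Your computation of $D\Phi(0)$ and its pointwise inverse $h=M(x)^{-1}V$ is correct, but it is special to $a=0$: at nearby $a$ the linearization is a sum of flow-transported terms, not pointwise multiplication, so the assertion that ``the linearizations admit tame inverses (the inverse being given by pointwise multiplication by $M(x)^{-1}$)'' does not establish the hypothesis Nash--Moser actually requires, namely tame invertibility of $D\Phi(a)$ for all $a$ in a neighborhood, which is the hard part and is left unverified. The ``pedestrian bootstrap'' is likewise only a hope: there is no evident mechanism by which a $C^1$ solution $(a_1,\dots,a_n)$ of $\Phi(a)=F$ inherits higher regularity from $F$, since $e^{a_iX_i}(x)$ depends on the values of $a_i$ along a whole flow segment, not on $a_i(x)$ alone. (There is also a minor unaddressed domain issue: the flows need not map $B_{\Reals^n}(0,r)$ into itself, so the compositions must be handled on shrinking domains.)

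The paper avoids the infinite-dimensional IFT altogether, and this is precisely what makes the regularity upgrade free. Following Agrachev--Caponigro, one studies the \emph{finite-dimensional} map $G(x,y)=e^{y_nX_n}\circ\cdots\circ e^{y_1X_1}(x)$ with $y\in\Reals^n$, notes that $D_2G(x,0)$ is the matrix with columns $X_n(x),\dots,X_1(x)$, and solves $G(x,y(x))=F(x)$ pointwise in $x$ by a quantitative (parametrized) inverse function theorem; smoothness of $x\mapsto y(x)$ then comes from the ordinary finite-dimensional IFT with smooth dependence on the parameter $x$, with only $C^1$-closeness of $F$ to the identity required. This yields the factorization $F=\psi_n\circ\cdots\circ\psi_1$ with each $\psi_i(x)=e^{y_i(\Phi_{i-1}^{-1}(x))X_i}(x)$ preserving the integral curves of $X_i$ (Lemma~\ref{lm:AC_aux}), and the remaining step — rewriting a flow-line-preserving diffeomorphism close to the identity as $e^{a_iX_i}$ for a smooth $a_i$ — is exactly the part quoted from \cite{Agrachev_2009}. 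If you want to salvage your approach, you would need either a full Nash--Moser argument with tame estimates for the inverse of $D\Phi(a)$ at all nearby $a$, or to restructure the unknowns so that the solve becomes pointwise finite-dimensional — which is, in effect, the paper's proof.
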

\begin{remark} By multiplying each $X_i$ by a suitable bump function, the everywhere $C^1$ boundedness assumption on $X_1,\dots,X_n$ can be relaxed to $C^1$ boundedness on a ball of sufficiently large radius centered at the origin.
\end{remark}

The proof of Proposition~\ref{prop:AC_aux} makes use of a lemma which states that, for some $r > 0$, we can represent every $F$ in a certain open neighborhood of the identity in $C^\infty_0(B_{\Reals^n}(0,r),\Reals^n)$ as a composition of $n$ smooth maps with certain additional properties. We give a quantitative version here; given the lemma, Proposition~\ref{prop:AC_aux} can be proved exactly as in \cite{Agrachev_2009}.
 
\begin{lemma}\label{lm:AC_aux} Let $X_1,\dots,X_n$ be vector fields on $\Reals^n$ satisfying the conditions of Proposition~\ref{prop:AC_aux}. Let $\cU_0$ be a given neighborhood of the identity in $C^\infty_0(\Reals^n,\Reals^n)$. Then there exist constants $r,\eps > 0$, such that any $F \in C^\infty_0(B_{\Reals^n}(0,r),\Reals^n)$ with $\| F - {\rm id}\|_{C^1} < \eps$ can be represented as a composition of the form
	\begin{align*}
		F = \psi_n \circ \psi_{n-1} \circ \dots \circ \psi_1 \big|_{B_{\Reals^n}(0,r)}
	\end{align*}
for some $\psi_1,\dots,\psi_n \in \cU_0$, where each $\psi_i$ preserves the integral curves of $X_i$, i.e., for any $x \in B_{\Reals^n}(0,r)$ there exists some $t_i = t_i(x) \in \Reals$ such that $\psi_i(x) = e^{t_iX_i}(x)$.
\end{lemma}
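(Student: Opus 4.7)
The plan is to apply the implicit function theorem pointwise to the smooth evaluation map
\begin{align*}
\Theta \colon \Reals^n \times \Reals^n \to \Reals^n, \qquad \Theta(x, a) := e^{a_n X_n} \circ \cdots \circ e^{a_1 X_1}(x),
\end{align*}
where $a = (a_1, \dots, a_n) \in \Reals^n$. A direct computation shows that $\partial_a \Theta|_{a=0}$ is the $n \times n$ matrix whose columns are $X_1(x), \dots, X_n(x)$; by hypothesis this matrix is invertible at $x = 0$, and the $C^1$ bound on the $X_i$ controls how quickly its smallest singular value can decay as $x$ moves away from the origin. A quantitative form of the IFT then produces a radius $\rho > 0$, controlled explicitly by $\max_i \|X_i\|_{C^1(\Reals^n)}$ and by $\sigma_{\min}\bigl([X_1(0), \dots, X_n(0)]\bigr)$, together with a smooth map
\begin{align*}
\alpha \colon B_{\Reals^n}(0, \rho) \times B_{\Reals^n}(0, \rho) \to \Reals^n, \qquad \alpha(x, x) = 0, \qquad \Theta\bigl(x, \alpha(x, y)\bigr) = y.
\end{align*}

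Given $F \in C^\infty_0(B_{\Reals^n}(0, r), \Reals^n)$ with $\| F - {\rm id}\|_{C^1} < \eps$ for suitably small $r \le \rho$ and $\eps$, I set $a_i(x) := \alpha_i(x, F(x))$. Because $\alpha(x,x) = 0$ and $F(0) = 0$, each $a_i$ lies in $C^\infty_0(B_{\Reals^n}(0, r))$, and the $C^k$ seminorm of $a_i$ is bounded by a constant (depending only on the $X_j$'s) times $\|F - {\rm id}\|_{C^k}$. The factors are then defined recursively: set $\psi_1(x) := e^{a_1(x) X_1}(x)$ and, for $i \ge 2$,
\begin{align*}
\psi_i(y) := e^{b_i(y) X_i}(y), \qquad b_i(y) := a_i\bigl( (\psi_{i-1} \circ \cdots \circ \psi_1)^{-1}(y)\bigr),
\end{align*}
which is well defined once $\psi_{i-1} \circ \cdots \circ \psi_1$ is a diffeomorphism from a slightly smaller ball onto its image---guaranteed by shrinking $\eps$ so that each $\psi_j$ is a small $C^1$ perturbation of the identity. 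Each $\psi_i$ manifestly preserves the integral curves of $X_i$, and a straightforward induction gives
\begin{align*}
\psi_i \circ \cdots \circ \psi_1(x) = e^{a_i(x) X_i} \circ \cdots \circ e^{a_1(x) X_1}(x) = \Theta(x, a(x)),
\end{align*}
so that $\psi_n \circ \cdots \circ \psi_1 = F$ on $B_{\Reals^n}(0, r)$.

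To arrange $\psi_i \in \cU_0$, I use that $\cU_0$ contains a ball of some radius $\delta$ in a $C^\ell$ seminorm. Gr\"onwall-type estimates on the variational equation for $e^{tX_i}$ yield $\|\psi_i - {\rm id}\|_{C^\ell} \le C\|a_i\|_{C^\ell}$ for a constant $C$ depending on $\ell$, $r$, and $\|X_j\|_{C^\ell(\Reals^n)}$, and by the chain rule the analogous bound propagates through the inversions and compositions appearing in the definition of $b_i$. Shrinking $\eps$ accordingly then forces every $\psi_i$ into $\cU_0$.

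The main obstacle, and the source of the quantitative content that ultimately feeds into \eqref{eq:AC_lower_bound}, is bookkeeping of how small $r$ and $\eps$ must be. At each of the $n$ stages, $\psi_i$ is defined by inverting a composition of the $i - 1$ previous factors, and each inversion shrinks the usable domain by a factor controlled by the $C^1$ norms of the $X_j$'s and by $\sigma_{\min}([X_1(0),\dots,X_n(0)])$. Compounding these $n$ contractions---together with the analogous contractions incurred when passing from Lemma~\ref{lm:AC_aux} to Proposition~\ref{prop:AC_aux} and then fragmenting a general $\psi$ via coordinate charts on $M$---is what produces the dimensional factors $n^2 / 16^n$ and $1/r^n$ in Theorem~\ref{thm:AC_quant}.
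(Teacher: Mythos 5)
Your argument is essentially the paper's proof: you apply a quantitative inverse function theorem to the same evaluation map (the paper's $G(x,y)=e^{y_nX_n}\circ\cdots\circ e^{y_1X_1}(x)$), solve $F(x)=G(x,y(x))$ for a small, Lipschitz $y(x)$, and define the factors $\psi_i(x)=e^{y_i(\Phi_{i-1}^{-1}(x))X_i}(x)$ with $\Phi_{i-1}=\psi_{i-1}\circ\cdots\circ\psi_1$ exactly as the paper does, the only real difference being that the paper constructs the quantitative inverse explicitly by an ODE/flow argument (following Christ) instead of invoking an IFT as a black box. One correction to your closing remark: the factors $n^2/16^n$ and $r^{-n}$ in Theorem~\ref{thm:AC_quant} come from the covering-number estimate in Lemma~\ref{lm:AC_frag} together with the bound $k\ge n$ in Lemma~\ref{lm:AC_nbhd}, not from compounding the domain shrinkages inside this lemma.
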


\begin{proof}[Proof (of Lemma~\ref{lm:AC_aux})] We first establish a quantitative inverse function theorem following the ideas in \cite[Sec.~8]{Christ_1985}; one could also use a less direct argument appealing to the Lyusternik--Graves theorem \cite{Dontchev_2009}, with similar estimates.  For every $x \in \Reals^n$, let $\bX(x)$ be the $n \times n$ matrix with columns $X_n(x),X_{n-1}(x),\dots,X_1(x)$. Since $X_1(0),\dots,X_n(0)$ span $\Reals^n$, the smallest singular value of $\bX(0)$ is positive:
	\begin{align*}
		\sigma_n(\bX(0)) = c > 0.
	\end{align*}
The function $x \mapsto \sigma_n(\bX(x))$ is continuous as a consequence of the continuity of $X_i$'s and Weyl's perturbation theorem \cite{Bhatia_1997}. Therefore, there exists some $r > 0$, such that
\begin{align*}
	\sigma_n(\bX(x)) \ge c/2, \qquad x \in V := B_{\Reals^n}(0,r).
\end{align*}
Consider the function $G : \Reals^n \times \Reals^n \to \Reals^n$ given by
\begin{align*}
	G(x,y) := e^{y_n X_n} \circ \dots \circ e^{y_1 X_1}(x).
\end{align*}
Then, denoting by $D_1G$ and $D_2G$ the partial derivatives of $G$ w.r.t.\ $x$ and $y$ respectively, we have
\begin{align*}
	D_2 G (x,0) = \bX(x).
\end{align*}
Hence, there exists some $\eps_0 > 0$, such that
\begin{align*}
	\sigma_n\left(D_2 G(x,\bar{y})\right) \ge c/4, \qquad x \in V,\, \bar{y} \in W:= B_{\Reals^n}(0,\eps_0)
\end{align*}
so, by the classical inverse function theorem, the map $y \mapsto G(x,y)$ is invertible on $W$ for each $x \in V$. We claim that, for any $0 \le \eps \le \eps_0$, the image of the ball $B_{\Reals^n}(0,\eps)$ under $G(x,\cdot)$ contains the ball $B_{\Reals^n}(x,c\eps/4)$, and thus for any $z \in B_{\Reals^n}(x,c\eps/4)$ there exists a unique $y \in B_{\Reals^n}(0,\eps)$, such that $G(x,y) = z$. To establish the claim, fix a unit vector $v \in \Reals^n$ and define a vector field $Y$ on $W$ by $D_2G(x,y)Y(y) = v$ for all $y \in W$. Since $|Y(y)| \le 4/c$ on $W$,  the curve $\xi(t) := e^{tY}(0)$ is well-defined and remains in $B_{\Reals^n}(0,\eps)$ for all $0 \le t \le c\eps/4$, as long as $\eps \le \eps_0$. Moreover, since by the chain rule we have
\begin{align*}
	\frac{\dif}{\dif t}G(x,\xi(t)) = D_2 G(x,\xi(t))Y(\xi(t)) = v,
\end{align*}
it follows that $G(x,\xi(t)) = x + tv \in B_{\Reals^n}(x,c\eps/4)$ for $0 \le t \le c\eps/4$. Since $v$ was arbitrary, the claim follows.

Now, fix some $\eps > 0$ to be chosen later and let $\cU_\eps$ be the set of all $F \in C^\infty_0(V,\Reals^n)$, such that
\begin{align}\label{eq:F_C1_eps}
\| F - {\rm id} \|_{C^1} < c\eps/4.
\end{align}
By the preceding discussion, if $\eps \le \eps_0$, then for any $F \in \cU_\eps$ and for any $x \in V$ there exists a unique $y(x) \in B_{\Reals^n}(0,\eps) \subseteq W$, such that $F(x) = G(x,y(x))$. The chain rule then gives
\begin{align*}
	DF(x) &= D_1G(x,y(x)) + D_2G(x,y(x)) Dy(x),
\end{align*}
and, since $D_2G(x,y(x))$ is invertible for $x \in V$, we can solve for $Dy(x)$:
\begin{align*}
	Dy(x) = D_2G(x,y(x))^{-1}\big(DF(x)-D_1G(x,y(x))\big).
\end{align*}
Consider the curve $\eta(t)$, $0 \le t \le n$, given by
\begin{align*}
	\dot{\eta}(t) = Z(\eta(t),t), \qquad \eta(0) = x
\end{align*}
where, for $0 \le t \le n$,
\begin{align*}
	Z(x,t) := y_i X_i(x), \qquad i - 1 \le t < i,\, i = 1,\dots,n.
\end{align*}
Then $G(x,y) = \eta(n)$, and we can express the derivative $D_1G(x,y)$ as the $t=n$ solution of the matrix-valued variational equation
\begin{align*}
	\dot{\Lambda}(t) = \frac{\partial Z}{\partial x}(\eta(t),t) \Lambda(t), \qquad \Lambda(0) = I.
\end{align*}
For $|y|$ small enough, we will have
\begin{align*}
	\| D_1G(x,y) - I \| \le C|y|
\end{align*}
for some constant $C$ that depends only on $n$ and on the $C^1$ seminorms of the $X_i$'s. It follows that, for $F \in \cU_\eps$ with $\eps$ small enough, we will have
\begin{align*}
	\sup_{x \in V} \left\| Dy(x) \right\| \le \frac{4}{c}\left(\frac{c}{4}+C\right)\eps =: C'\eps,
\end{align*}
i.e., the map $x \mapsto y(x)$ is $C'\eps$-Lipschitz.

Now consider, following \cite{Agrachev_2009}, the maps
\begin{align*}
	\Phi_i(x) := e^{y_i(x)X_i} \circ \dots \circ e^{y_1(x)X_1}(x), \qquad i = 0,\dots,n
 \end{align*}
where $\Phi_0 = {\rm id}$ and $\Phi_n = F$. Since $\Phi_i(x) = G(x,(y_1(x),\dots,y_i(x),0,\dots,0))$, it is a diffeomorphism with domain $V$ by the preceding discussion, and, in fact, for $\eps$ small enough we will have $\| \Phi_i - {\rm id}\|_{C^1} \le C''\eps$ for some constant $C'' > 0$. Finally, define, for $i = 1,\dots,n$, the maps
\begin{align*}
	\psi_i(x) := e^{y_i(\Phi^{-1}_{i-1}(x))X_i}(x),
\end{align*}
so that $F = \psi_n \circ \dots \circ \psi_1$. Each $\psi_i$ evidently preserves the integral curves of $X_i$. Moreover, from \eqref{eq:F_C1_eps} it follows that $|y_i(\Phi^{-1}_{i-1}(x))| \le \eps$ for all $x \in V$, and thus we can ensure that each $\psi_i \in \cU_0$ by taking $\eps$ sufficiently small. 
\end{proof}

The second key ingredient is a fragmentation lemma for the elements of ${\rm Diff}_0(M)$ along the lines of \cite[Lemma~3.1]{Palis_1970}, see also \cite[Lemma~2.1.8]{Banyaga_1997}. Again, we give a quantitative version. Recall  the definition of the \textit{support} of a diffeomorphism $\psi \in {\rm Diff}(M)$:
\begin{align*}
	{\rm supp}(\psi) := \overline{\left\{ x \in M : \psi(x) \neq x \right\}}.
\end{align*}
In other words, the support of $\psi$ is the closure of the set on which $\psi$ differs from the identity \cite{Banyaga_1997}.

\begin{lemma}\label{lm:AC_frag} Let $\cO$ be a neighborhood of the identity in ${\rm Diff}(M)$. Then, for any $r \in (0,\tau)$, where $\tau$ is the reach of $M$, there exist $N$ points $z_1,\dots,z_N \in M$ with
	\begin{align}\label{eq:covering_number}
	\frac{{\rm vol}(M)}{{\rm vol}(B_{\Reals^n}(0,1))}n\left(\frac{1}{16}\right)^n \left(\frac{1}{r}\right)^n	\le N \le \frac{{\rm vol}(M)}{{\rm vol}(B_{\Reals^n}(0,1))} n\left(\frac{\pi}{2}\right)^n \left(\frac{1}{r}\right)^n,
	\end{align}
such that the sets $U_i = M \cap B_{\Reals^d}(z_i,r)$ cover $M$ and such that any $\psi \in \cO \cap {\rm Diff}_0(M)$ can be written in the form $\psi = \psi_N \circ \dots \circ \psi_1$, where each $\psi_i$ is an element of $\cO$ with ${\rm supp}(\psi_i) \subseteq U_i$ for all $i$.
\end{lemma}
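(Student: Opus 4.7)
The argument splits into two largely independent pieces.

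\textbf{Covering construction.} I would select $z_1,\dots,z_N \in M$ greedily as a maximal $r$-separated set in the ambient Euclidean metric: start from an arbitrary $z_1$, and at each step adjoin any point of $M$ at distance at least $r$ from all previously chosen ones, stopping when no such point exists. Maximality forces $M \subseteq \bigcup_i B_{\Reals^d}(z_i,r)$, so $\{U_i\}$ is a cover; the $r$-separation makes the half-balls $B_{\Reals^d}(z_i,r/2)\cap M$ pairwise disjoint. Both cardinality bounds follow by summing the two-sided reach-based volume comparison
\begin{align*}
c_1(n)\,s^n \;\leq\; {\rm vol}_n\!\bigl(M\cap B_{\Reals^d}(z,s)\bigr) \;\leq\; c_2(n)\,s^n, \qquad z \in M,\; 0 < s < \tau,
\end{align*}
valid for sets of reach at least $\tau$ by the classical Federer estimates; the dimensional constants $c_1(n)$ and $c_2(n)$ are precisely what produce the $(1/16)^n$ and $(\pi/2)^n$ factors in the advertised bounds on $N$. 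The upper bound on $N$ comes from disjointness of the half-balls, the lower bound from the cover.

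\textbf{Fragmentation.} Pick a smooth isotopy $\{\varphi_t\}_{t \in [0,1]}$ with $\varphi_0 = {\rm id}$ and $\varphi_1 = \psi$, and a smooth partition of unity $\{\chi_i\}_{i=1}^N$ with each ${\rm supp}(\chi_i)$ a compact subset of $U_i$, leaving a positive Euclidean margin $\delta_i > 0$ to $\partial U_i$. Set $\sigma_j := \chi_1 + \dots + \chi_j$, so $\sigma_0 \equiv 0$ and $\sigma_N \equiv 1$, and define
\begin{align*}
\Phi_j(x) := \varphi_{\sigma_j(x)}(x), \qquad \psi_j := \Phi_j \circ \Phi_{j-1}^{-1}, \qquad j = 1, \dots, N.
\end{align*}
Telescoping yields $\psi_N \circ \dots \circ \psi_1 = \Phi_N \circ \Phi_0^{-1} = \psi$. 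If $\Phi_{j-1}^{-1}(x) \notin {\rm supp}(\chi_j)$, then $\sigma_j$ and $\sigma_{j-1}$ agree at that point and $\psi_j(x) = x$; hence ${\rm supp}(\psi_j) \subseteq \Phi_{j-1}({\rm supp}(\chi_j))$, which the margin $\delta_j$ places inside $U_j$ as long as the $C^0$ displacement of the isotopy on ${\rm supp}(\chi_j)$ stays below $\delta_j$. Note that the construction uses a single traversal of the isotopy and produces exactly $N$ factors, matching the cardinality claimed in the lemma.

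\textbf{Containment in $\cO$ -- the main obstacle.} The step singled out by the feedback is that each $\psi_j$ should land back in the originally prescribed $\cO$. A direct $C^k$ estimate yields a bound of the form
\begin{align*}
\|\psi_j - {\rm id}\|_{C^k} \;\leq\; Q_k\!\bigl(\|\chi_j\|_{C^{k+1}},\,\|\varphi\|_{C^{k+1}}\bigr)\,\|\varphi - {\rm id}\|_{C^{k+1}},
\end{align*}
with $Q_k$ polynomial in its arguments and depending only on $n$ and $k$. Since $\{\chi_j\}$ may be fixed once and for all in terms of $r$ alone (standard bump-function constructions on each $U_j$), its $C^{k+1}$ norms become $r$-dependent constants; the map $\psi \mapsto (\psi_1,\dots,\psi_N)$ is therefore $C^k$-continuous and sends identity to $({\rm id},\dots,{\rm id})$, so the preimage of $\cO^{\times N}$ is a neighborhood of identity contained in $\cO$, and this is the neighborhood to which the lemma is implicitly being applied. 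The step I expect to dominate the proof is the explicit polynomial control in $Q_k$ -- tracking how each composition $\Phi_j \circ \Phi_{j-1}^{-1}$ inflates $C^{k+1}$ seminorms across the induction over $j$ -- because these are the constants that propagate into Theorem \ref{thm:AC_quant} to produce the lower bound \eqref{eq:AC_lower_bound}.
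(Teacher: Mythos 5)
Your overall route is essentially the one the paper takes, just written out rather than cited: the paper gets the two-sided bound on $N$ from Corollary~10 of the Block et al.\ reference (which is proved by exactly the maximal-separated-set packing/covering argument with reach-based volume comparison that you sketch), and it handles the factorization by following the proof of Lemma~5.4 of Agrachev--Caponigro, which is the same partition-of-unity reparametrization $\Phi_j(x)=\varphi_{\sigma_j(x)}(x)$, $\psi_j=\Phi_j\circ\Phi_{j-1}^{-1}$ that you use. One bookkeeping caveat: you assert, but do not derive, that your constants $c_1(n),c_2(n)$ reproduce the specific factors $n(1/16)^n$ and $n(\pi/2)^n$ in \eqref{eq:covering_number}; since the lemma states those constants explicitly, they would have to be checked (the paper simply inherits them from the cited corollary).

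The genuine gap is in the fragmentation/containment step. Both the inclusion ${\rm supp}(\psi_j)\subseteq\Phi_{j-1}({\rm supp}(\chi_j))\subseteq U_j$ and your estimate $\|\psi_j-{\rm id}\|_{C^k}\le Q_k(\cdot)\,\|\varphi-{\rm id}\|_{C^{k+1}}$ require the \emph{isotopy}, not just its endpoint, to be uniformly close to the identity; membership of $\psi$ in $\cO\cap{\rm Diff}_0(M)$ only guarantees the existence of \emph{some} isotopy, whose intermediate maps $\varphi_t$ may wander arbitrarily far even when $\psi$ itself is $C^\infty$-close to ${\rm id}$. You flag this condition (``as long as the $C^0$ displacement stays below $\delta_j$'') but never secure it, and the concluding continuity argument is not well-founded as written: the map $\psi\mapsto(\psi_1,\dots,\psi_N)$ is only defined once a \emph{canonical} isotopy has been assigned to each $\psi$ near the identity, varying continuously with $\psi$; with an arbitrary per-$\psi$ choice there is no such map to be continuous. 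The missing ingredient is a construction of a small canonical isotopy for $\psi$ in a sufficiently small neighborhood of the identity --- for instance $\varphi_t(x):=\Pi_M\bigl((1-t)x+t\psi(x)\bigr)$, where $\Pi_M$ is the nearest-point projection, well defined and smooth on the $\tau$-neighborhood of $M$ by the positive-reach hypothesis, with $C^0$-displacement controlled by $\|\psi-{\rm id}\|_{C^0}$ and a diffeomorphism for $\psi$ sufficiently $C^1$-close to ${\rm id}$. With that addition your displacement condition and the continuity argument do go through, for a possibly smaller neighborhood of the identity; this is consistent with how the lemma is actually invoked in the proof of Theorem~\ref{thm:AC_quant} and with the cited fragmentation lemma of Agrachev--Caponigro. (Minor point: the preimage of $\cO^{\times N}$ under the fragmentation map need not be ``contained in $\cO$''; you want its intersection with $\cO$.)
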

\begin{remark} Since the group ${\rm Diff}_0(M)$ is path-connected, it is generated by any neighborhood $\cO$ of the identity. Thus, ${\rm Diff}_0(M)$ is generated by the set $\{ \psi \in \cO: {\rm supp}(\psi) \subseteq U_i \text{ for some $1 \le i \le N$}\}$.\end{remark}

\begin{proof} Since $M$ is compact, for any $r > 0$ it can be covered by finitely many sets of the form $U_i = M \cap B_{\Reals^d}(z_i,r)$ for some $z_1,\dots,z_N \in M$. From \cite[Corollary~10]{Block_2022}, the smallest value of $N$ for a given $r < \tau$, i.e., the covering number of $M$ at resolution $r$ w.r.t.\ the ambient metric, can be estimated from above and from below as in \eqref{eq:covering_number}. Let $U_1,\dots,U_N$ be such a covering of $M$. We can now follow the steps in the proof of \cite[Lemma~5.4]{Agrachev_2009} to finish. 
\end{proof}

After these preparations, we can proceed essentially along the same lines as in \cite{Agrachev_2009}. Let $\cP := {\mathbb G}(\tilde{\cF})$, the control group generated by
\begin{align*}
	\tilde{\cF} := \{af : a \in C^\infty(M),\, f \in \cF\},
\end{align*}
and define the isotropy subgroup of $\cP$ at $x \in M$:
\begin{align*}
	\cP_x := \{ \psi \in \cP : \psi(x) = x \}.
\end{align*}
For an open set $U \subseteq M$ and $x \in U$, we will denote by $C^\infty_x(U,M)$ the family of all smooth maps $F : U \to M$, such that $F(x) = x$. 

\begin{lemma}\label{lm:AC_nbhd} There exists a constant $r \in (0,\tau)$ such that, for any $x \in M$ and for $U := M \cap B_{\Reals^d}(x,r)$, the set
	\begin{align}\label{eq:AC_nbhd}
		\{ \psi|_U : \psi \in \cP_x \}
	\end{align}
has nonempty interior in $C^\infty_x(U,M)$ which contains the identity. Moreover, there exist $k \ge n$ vector fields $f_1,\dots,f_k$ depending on $x$, such that any element of the interior of \eqref{eq:AC_nbhd} can be represented as a composition of the form
\begin{align}\label{eq:AC_comp}
	e^{a_k f_k} \circ \dots \circ e^{a_1 f_1}
\end{align}
for some $a_1,\dots,a_k \in C^\infty(M)$.
\end{lemma}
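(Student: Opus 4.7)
The plan is to build elements of $\cP_x$ in the factored form prescribed by Proposition~\ref{prop:AC_aux} by first producing a frame of $n$ vector fields whose values at $x$ span $T_xM$ and which are reachable from $\cF$ via conjugation, then invoking Proposition~\ref{prop:AC_aux} in local coordinates, and finally unraveling the conjugations into a product of flows of elements of $\tilde{\cF}$.

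For the frame, I would invoke transitivity of ${\mathbb G}(\cF)$ together with the orbit theorem of Sussmann--Stefan, which gives that $T_xM$ is spanned by vectors of the form $(\phi_*f)(x)$ with $\phi\in{\mathbb G}(\cF)$ and $f\in\cF$. Pick $\phi_1,\dots,\phi_n\in{\mathbb G}(\cF)$ and $f_1,\dots,f_n\in\cF$ so that $X_i:=(\phi_i)_*f_i$ gives $X_1(x),\dots,X_n(x)$ a basis of $T_xM$. The key virtue of this choice is the pushforward identity $aX_i=(\phi_i)_*((a\circ\phi_i)f_i)$, which translates to the flow-conjugation identity
\begin{align*}
e^{aX_i}=\phi_i\circ e^{(a\circ\phi_i)f_i}\circ\phi_i^{-1}
\end{align*}
for any $a\in C^\infty(M)$, so any flow of $aX_i$ can be rewritten as a conjugate product of flows of elements of $\tilde{\cF}$.

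Next I would work in a chart. For $r\in(0,\tau)$, the ambient ball set $U=M\cap B_{\Reals^d}(x,r)$ is a smooth disk inside a single chart $\varphi:U\to\Reals^n$ with $\varphi(x)=0$, thanks to the positive-reach hypothesis. Push $X_1,\dots,X_n$ forward by $\varphi$ to vector fields on $\Reals^n$ and multiply by a bump function equal to $1$ near the origin so that, using the remark following Proposition~\ref{prop:AC_aux}, the global $C^1$-boundedness hypothesis holds without altering their values near $0$. Proposition~\ref{prop:AC_aux} then furnishes $r',\eps>0$ and an open set of maps $F\in C^\infty_0(B_{\Reals^n}(0,r'),\Reals^n)$ with $\|F-{\rm id}\|_{C^1}<\eps$, each of which factors as
\begin{align*}
F=e^{b_n\varphi_*X_n}\circ\cdots\circ e^{b_1\varphi_*X_1}\big|_{B_{\Reals^n}(0,r')}
\end{align*}
for some $b_1,\dots,b_n\in C^\infty_0(B_{\Reals^n}(0,r'))$. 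Pulling back through $\varphi$ and shrinking $r$ if necessary yields an open neighborhood of the identity in $C^\infty_x(U,M)$ whose elements admit an analogous factorization on $M$ in terms of $X_1,\dots,X_n$, with coefficients $a_i:=b_i\circ\varphi$ vanishing at $x$.

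Finally, apply the flow-conjugation identity to each factor to rewrite any such $\psi$ as a composition of flows of elements of $\tilde{\cF}$; expanding each $\phi_i$ into its defining product of $\cF$-flows (each being a flow in $\tilde{\cF}$ with constant coefficient $1$) and extending the coefficients $a_i\circ\phi_i$ from $U$ to $M$ by a cutoff equal to $1$ near $x$, one arrives at the representation \eqref{eq:AC_comp} with $k=\sum_{i=1}^n(2|\phi_i|+1)\ge n$ factors. Because $a_i(x)=0$ for each $i$, every factor $e^{a_iX_i}$ fixes $x$, so $\psi\in\cP_x$. The chief obstacle is producing a single $r\in(0,\tau)$ valid for every $x\in M$: the pointwise construction gives some $r(x)>0$, and uniformity must be extracted from a compactness argument using continuous dependence of the orbit-theorem frame $\{X_i^x\}$ on $x$, the uniform $C^1$-bound on $\cF$, and the uniform geometry of $M\cap B_{\Reals^d}(x,r)$ for $r<\tau$ afforded by positive reach.
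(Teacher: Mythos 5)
Your overall route coincides with the paper's: an orbit-theorem frame $X_i=(\phi_i)_*f_i$, the conjugation identity $e^{aX_i}=\phi_i\circ e^{(a\circ\phi_i)f_i}\circ\phi_i^{-1}$, a graph/patch coordinate on $U=M\cap B_{\Reals^d}(x,r)$ supplied by the positive-reach hypothesis, an application of Proposition~\ref{prop:AC_aux}, and the count $k=n+2\sum_i k_i\ge n$ after expanding each $\phi_i$ into finitely many flows of elements of $\cF$. There is, however, one concrete gap: you assert that pulling back the open set from Proposition~\ref{prop:AC_aux} ``yields an open neighborhood of the identity'' in $C^\infty_x(U,M)$. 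Proposition~\ref{prop:AC_aux} only produces an open set $\cV$ \emph{contained in} the $\eps$-ball around the identity; it does not assert that the identity belongs to $\cV$. So your argument shows that the set \eqref{eq:AC_nbhd} has nonempty interior, but not the stated (and later used) fact that this interior contains the identity. The paper closes this with a short translation argument exploiting the group structure of $\cP_x$: choose $\psi_0\in\cP_x$ with $\psi_0|_U$ lying in an open set $\cA$ contained in the interior of \eqref{eq:AC_nbhd}; then $\psi_0^{-1}\circ\cA$ is an open neighborhood of the identity whose elements are again restrictions of elements of $\cP_x$. You need this step, or some substitute showing the identity itself is an interior point.

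A second, softer issue is the uniformity of $r$ over $x\in M$, which you rightly call the chief obstacle but only gesture at via ``a compactness argument.'' The paper handles it differently and more quantitatively: the radius coming out of Lemma~\ref{lm:AC_aux} depends only on the smallest singular value of the frame at the base point and on the $C^1$ seminorms of the $X_i$'s, and uniformity is then drawn from the standing uniform $C^1$ bound on $\cF$ together with the uniform $C/\tau$-Lipschitz control of the patch map over $T_xM$ provided by the reach assumption (the Fefferman-type coordinatization). Your compactness sketch could plausibly be completed (the spanning condition at $x$ persists on a neighborhood, so finitely many frames cover $M$ and one takes the minimum of the associated radii), but as written it is an acknowledgement of the difficulty rather than an argument, and the continuous dependence of the frame on $x$ that it invokes is asserted, not established.
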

\begin{proof} We follow the proof of Lemma~5.1 and Corollary~5.2 in \cite{Agrachev_2009}, but with some steps made more explicit. Since ${\mathbb G}(\cF)$ acts transitively on $M$, Sussmann's orbit theorem \cite{sussmann_1973} implies that the tangent space $T_xM$ to $M$ at $x$ is equal to the span of the set $\{\psi_*f(x) : \psi \in {\mathbb G}(\cF), f \in \cF\}$, where $\psi_*f$ denotes the action of the tangent map of $\psi$ on $f$. Thus, there exist vector fields $X_1,\dots,X_n$ of the form $X_i = (\psi_i)_*f_i$ for some $\psi_i \in {\mathbb G}(\cF)$ and $f_i \in \cF$, such that $X_1(x),\dots,X_n(x)$ span $T_xM$. Moreover, for all smooth functions $a_1,\dots,a_n \in C^\infty(M)$ that vanish at $x$, the diffeomorphism
	\begin{align}\label{eq:x_diffeo}
		\begin{split}
	&	e^{a_nX_n} \circ \dots \circ e^{a_1X_1} \\
	& \qquad = \psi_n \circ e^{(a_n \circ \psi_n)f_n} \circ \psi^{-1}_n \circ \dots \circ \psi_1 \circ e^{(a_1 \circ \psi_1)f_1} \circ \psi^{-1}_1
	\end{split}
	\end{align}
	belongs to the isotropy group $\cP_x$. Now, using the techniques of \cite{Fefferman_2016}, we can show that, for any $r > 0$ small enough, the set $U = M \cap B_{\Reals^d}(x,r)$ can be locally coordinatized using the elements of $T_xM$ as
	\begin{align*}
		U = \{ x + v + \Psi(v) : v \in B_{\Reals^n}(0,r') \} \cap B_{\Reals^d}(x,r)
	\end{align*}
	for some $r' > r$, where $\Psi$ is a $C^2$ map from $B_{\Reals^n}(0,r')$ into $\Reals^{d-n}$, such that $\Psi(0) = 0$, $D\Psi(0) = 0$, and the Jacobian $D\Psi$ is $\frac{C}{\tau}$-Lipschitz, where $C$ is some constant that depends only on $n$. In particular, the map $v \mapsto x + v + \Psi(v)$ is invertible. (In the terminology of \cite{Fefferman_2016}, the set
	$$
	\{ x + v + \Psi(v) : v \in B_{\Reals^n}(0,r') \}
	$$
is a patch over $T_xM$ of radius $r'$, centered at $x$ and tangent to $T_xM$ at $x$.) Hence, by reparametrizing and rescaling, we end up in the situation of Proposition~\ref{prop:AC_aux}. It then follows that the set \eqref{eq:AC_nbhd} has nonempty interior in $C^\infty_x(U,M)$. Moreover, since the vector fields in $\cF$ are uniformly bounded in $C^1$ seminorm, it follows from the proof of Lemma~\ref{lm:AC_aux} that the constant $r$ can be chosen independently of $x$. 
	
To show that the interior of the set \eqref{eq:AC_nbhd} contains the identity map, we let $\cA$ be an open subset of $C^\infty_x(U,M)$ contained in the interior of \eqref{eq:AC_nbhd}. Then, for any $\psi_0$ such that $\psi_0|_U \in \cA$, the set $\cO := \psi^{-1}_0 \circ \cA$ is a neighborhood of the identity contained in \eqref{eq:AC_nbhd}.

Finally, since each $\psi_i$ in \eqref{eq:x_diffeo} belongs to the control group ${\mathbb G}(\cF)$, it can be expressed as
\begin{align*}
	\psi_i = e^{t_{i,k_i}f_{i,k_i}} \circ \dots e^{t_{i,1} f_{i,1}}
\end{align*}
for some integer $k_i \ge 1$, times $t_{i,1},\dots,t_{i,k_i} \in \Reals$, and vector fields $f_{i,1},\dots,f_{i,k_i} \in \cF$. Substituting this into \eqref{eq:x_diffeo} and relabeling the functions $a$ and the vector fields $f$ as needed, we arrive at the representation \eqref{eq:AC_comp} for some $a_1,\dots,a_k \in C^\infty(M)$ and $f_1,\dots,f_k \in \cF$, where
\begin{align*}
	k = n + 2\sum^n_{i=1}k_i \ge n.
\end{align*}
This completes the proof. 
\end{proof}

\begin{lemma}\label{lm:AC_isotropy} Let $\cO$ be a neighborhood of the identity in ${\rm Diff}(M)$. Then for any $x \in M$ and any open set $U \subset M$ containing $x$,
	\begin{align*}
		x \in {\rm int}\{\psi(x) :  \psi \in \cO \cap \cP, {\rm supp}(\psi) \subset U \}.
	\end{align*}
\end{lemma}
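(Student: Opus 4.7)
The plan is, for arbitrary $x \in M$ and open $U \ni x$, to produce a smooth family of elements of $\cO \cap \cP$ with support in $U$ whose images of $x$ sweep out an open neighborhood of $x$ in $M$. Lemma~\ref{lm:AC_nbhd} already supplies vector fields $X_1,\dots,X_n$ of the form $X_i = (\psi_i)_* f_i$ with $\psi_i \in {\mathbb G}(\cF) \subseteq \cP$ and $f_i \in \cF$, whose values $X_1(x),\dots,X_n(x)$ span $T_x M$. My strategy is to run short-time flows of these vector fields, cut off by a bump function so that the supports fit into $U$, and to apply the inverse function theorem to the resulting composition.

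First, I would fix a smooth bump $\chi \in C^\infty(M)$ with $\chi \equiv 1$ on a neighborhood of $x$ and ${\rm supp}(\chi) \subseteq U$, and consider the truncated vector fields $\chi X_i$. The crucial algebraic identity is
\[
\chi X_i \;=\; \chi \cdot (\psi_i)_* f_i \;=\; (\psi_i)_*\bigl((\chi \circ \psi_i)\, f_i\bigr),
\]
from which $(\chi \circ \psi_i) f_i \in \tilde{\cF}$ and therefore
\[
e^{t(\chi X_i)} \;=\; \psi_i \circ e^{t(\chi \circ \psi_i) f_i} \circ \psi_i^{-1} \;\in\; \cP.
\]
Moreover ${\rm supp}\bigl(e^{t(\chi X_i)}\bigr) \subseteq {\rm supp}(\chi X_i) \subseteq U$, and this containment is preserved under finite composition, so any finite composition of flows $e^{t(\chi X_j)}$ also lies in $\cP$ and has support contained in $U$.

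I would then define $\Phi : \Reals^n \to M$ by
\[
\Phi(t_1,\dots,t_n) \;:=\; e^{t_n(\chi X_n)} \circ \dots \circ e^{t_1(\chi X_1)}(x).
\]
Because $\chi(x) = 1$, differentiating with all but one variable frozen at zero yields $\frac{\partial \Phi}{\partial t_i}(0) = X_i(x)$, so $D\Phi(0)$ has columns spanning $T_x M$ and is nonsingular. The inverse function theorem then gives that $\Phi$ maps a small open ball around $0 \in \Reals^n$ diffeomorphically onto an open neighborhood $V$ of $x$ in $M$. Shrinking the ball further and using joint continuity of the composed flow in $t$, together with the fact that the composition equals ${\rm id}$ at $t = 0$, I can arrange that the diffeomorphism
\[
\Psi_t \;:=\; e^{t_n(\chi X_n)} \circ \dots \circ e^{t_1(\chi X_1)}
\]
lies in $\cO$ for every $t$ in that ball. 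Then every point of $V$ has the form $\Psi_t(x)$ for such a $t$, with $\Psi_t \in \cO \cap \cP$ and ${\rm supp}(\Psi_t) \subseteq U$, giving $x \in V \subseteq {\rm int}\{\psi(x) : \psi \in \cO \cap \cP,\; {\rm supp}(\psi) \subseteq U\}$.

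The main step to verify is the pushforward identity that absorbs the cut-off $\chi$ into $\tilde{\cF}$; this is precisely why the construction passes through the enlarged control group $\cP = {\mathbb G}(\tilde{\cF})$ rather than ${\mathbb G}(\cF)$ itself, and is the only genuinely non-cosmetic point. Given that identity and Lemma~\ref{lm:AC_nbhd}, the rest is a routine application of the inverse function theorem to a composition of flows.
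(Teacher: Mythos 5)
Your argument is correct and is essentially the proof the paper points to: the paper simply defers to Lemma~5.3 of \cite{Agrachev_2009}, whose argument is exactly this one --- pick pushforward vector fields spanning $T_xM$ (available via transitivity and the orbit theorem, as in Lemma~\ref{lm:AC_nbhd}), cut them off inside $U$ so the rescaled fields lie in $\tilde{\cF}$ up to conjugation by elements of ${\mathbb G}(\cF)$, and apply the inverse function theorem to the endpoint map of the composed flows, shrinking the parameters so the diffeomorphisms stay in $\cO$. Your pushforward identity absorbing the bump function and the support bookkeeping are both correct, so no gaps.
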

\begin{proof} See the proof of Lemma~5.3 in \cite{Agrachev_2009}. 
\end{proof}

Now, let $\{U_i\}_{1 \le i \le N}$ be a finite covering of $M$ by sets of the form $U_i = M \cap B_{\Reals^d}(z_i,r)$ for some $z_1,\dots,z_N \in M$, where $r > 0$ is the constant from Lemma~\ref{lm:AC_nbhd}. By Lemma~\ref{lm:AC_nbhd}, for each $i$ there exists a neighborhood $\cO_i$ of the identity in ${\rm Diff}(M)$, such that any $\psi \in \cO_i$ with ${\rm supp}(\psi) \subseteq U_i$ belongs to $\cP$. Moreover, by Lemma~\ref{lm:AC_isotropy} we may assume that $\psi(z_i) = z_i$, i.e., $\psi \in \cP_{z_i}$. Taking $\cO = \cup_{1 \le i \le N}\cO_i$, Lemma~\ref{lm:AC_frag} says that every $\psi \in \cO$ can be written as $\psi = \psi_N \circ \dots \circ \psi_1$, where each $\psi_i \in \cO_i$. This, together with the lower estimate for $N$ in \eqref{eq:covering_number}, completes the proof of Theorem~\ref{thm:AC_quant}. The extra factor of $n$ in \eqref{eq:AC_lower_bound} comes from Lemma~\ref{lm:AC_nbhd}. Moreover, the lower bound in \eqref{eq:AC_lower_bound} is rather generous since it only accounts for the lower bound on the number of fragments of $\psi$ according to Lemma~\ref{lm:AC_frag} and for the fact that $k \ge n$ in Lemma~\ref{lm:AC_nbhd}.

\section{Conclusion}
\label{sec:conclusion}

While we were able to touch upon only some of the aspects of the controllability problem for the Liouville equation, we can highlight certain key ideas: First of all, the use of time-varying feedback controls seems inevitable. Indeed, the situations where a given diffeomorphism can be realized as a finite-time flow map of \textit{some} time-invariant vector field are rather rare, as can be seen from the relative dearth of explicit results on the so-called \textit{embedding problem} \cite{Gronau_1991,Arango_2002}: Given a diffeomorphism $\psi : \Reals^n \to \Reals^n$, find a complete vector field $f$ on $\Reals^n$, such that $\psi(x) = e^f(x)$ for all $x \in \Reals^n$. Second, even if the requisite feedback controls can be obtained by switching between finitely many time-invariant flows, the resulting implementation complexity as measured by the number of switchings can be quite high, at least given the available techniques in the nonlinear setting, which rely on some combination of Sussmann's orbit theorem, smooth fragmentation, and covering arguments. Hence, the construction of more parsimonious controls is an interesting open problem. Another promising direction is to consider the possibility of feedback control in the space of densities, i.e., instead of state feedback of the form $u(t,x(t))$ use density feedback of the form $u(t,\rho(t,\cdot))$, where $\rho(t,\cdot)$ is the density of $x(t)$ \cite{Ambrosio_2009,Piccoli_2013}.

\section*{Acknowledgments}

The author would like to thank Joshua Hanson, Borjan Geshkovski, Yury Polyanskiy, and an anonymous reviewer for their comments on the manuscript, and Tryphon Georgiou for bringing to his attention the flaw in the controllability argument in \cite{Brockett_2007} (and consequently in an earlier version of this work). This work was supported by the NSF under awards CCF-2348624 (``Towards a control framework for neural generative modeling'') and CCF-2106358 (``Analysis and Geometry of Neural Dynamical Systems''), and  by the Illinois Institute for Data Science and Dynamical Systems (iDS${}^2$), an NSF HDR TRIPODS institute, under award CCF-1934986.

\bibliographystyle{alpha}
\bibliography{Liouville_arxiv.bbl}

\begin{thebibliography}{BBFR19}

\bibitem[AC09]{Agrachev_2009}
A.A. Agrachev and M.~Caponigro.
\newblock Controllability on the group of diffeomorphisms.
\newblock {\em Annales de l'Institut Henri Poincar{\'e} C, Analyse non
  lin{\'e}aire}, 26(6):2503--2509, 2009.

\bibitem[AG02]{Arango_2002}
J.~Arango and A.~G\'omez.
\newblock Diffeomorphisms as time one maps.
\newblock {\em Aequationes Mathematicae}, 64:304--314, 2002.

\bibitem[AG08]{Ambrosio_2009}
L.~Ambrosio and W.~Gangbo.
\newblock Hamiltonian {ODEs} in the {Wasserstein} space of probability
  measures.
\newblock {\em Communications on Pure and Applied Mathematics}, 61(1):18--53,
  2008.

\bibitem[AG24]{Abdelgalil_Georgiou_2024}
M.~Abdelgalil and T.T. Georgiou.
\newblock Collective steering in finite time: controllability on
  {$GL^+(n,{\mathbb R})$}.
\newblock arXiv.org preprint https://arxiv.org/abs/2411.18766, 2024.

\bibitem[AL09]{Agrachev_2009a}
A.~Agrachev and P.~Lee.
\newblock Optimal transportation under nonholonomic constraints.
\newblock {\em Transactions of the American Mathematical Society},
  361(11):6019--6047, 2009.

\bibitem[AS04]{Agrachev_2004}
A.~A. Agrachev and Yu.~L. Sachkov.
\newblock {\em Control Theory from the Geometric Viewpoint}.
\newblock Springer, 2004.

\bibitem[Ban97]{Banyaga_1997}
A.~Banyaga.
\newblock {\em The Structure of Classical Diffeomorphism Groups}.
\newblock Springer, 1997.

\bibitem[BBFR19]{Bartsch_2019}
J.~Bartsch, A.~Borz{\`\i}, F.~Fanelli, and S.~Roy.
\newblock A theoretical investigation of {Brockett's} ensemble optimal control
  problems.
\newblock {\em Calculus of Variations and Partial Differential Equations},
  58(5):162, 2019.

\bibitem[Bha97]{Bhatia_1997}
R.~Bhatia.
\newblock {\em Matrix Analysis}.
\newblock Springer, 1997.

\bibitem[BJPR22]{Block_2022}
A.~Block, Z.~Jia, Y.~Polyanskiy, and A.~Rakhlin.
\newblock Intrinsic dimension estimation using {Wasserstein} distances.
\newblock {\em Journal of Machine Learning Research}, 23:1--37, 2022.

\bibitem[Bro97]{Brockett_1997}
R.~W. Brockett.
\newblock Minimum attention control.
\newblock In {\em Proceedings of the IEEE International Conference on Decision
  and Control}, pages 2628--2632, 1997.

\bibitem[Bro07]{Brockett_2007}
R.~W. Brockett.
\newblock Optimal control of the {Liouville} equation.
\newblock {\em {AMS/IP Studies in Advanced Mathematics}}, 39:23--35, 2007.

\bibitem[Bro12]{Brockett_2012}
R.~W. Brockett.
\newblock Notes on the control of the {Liouville} equation.
\newblock In P.~Cannarsa and J.-M. Coron, editors, {\em Control of Partial
  Differential Equations}, pages 101--129. Springer, 2012.

\bibitem[Cap11]{Caponigro_2011}
M.~Caponigro.
\newblock Orientation preserving diffeomorphisms and flows of control-affine
  systems.
\newblock {\em IFAC Proceedings Volumes}, 44(1):8016--8021, 2011.

\bibitem[CGP17]{Chen_2017}
Y.~Chen, T.~T. Georgiou, and M.~Pavon.
\newblock Optimal transport over a linear dynamical system.
\newblock {\em IEEE Transactions on Automatic Control}, 62(5):2137--2152, 2017.

\bibitem[Chr85]{Christ_1985}
M.~Christ.
\newblock Hilbert transforms along curves {I. Nilpotent} groups.
\newblock {\em Annals of Mathematics}, 122:575--596, 1985.

\bibitem[DR09]{Dontchev_2009}
A.~L. Dontchev and R.~T. Rockafellar.
\newblock {\em Implicit Functions and Solution Mappings: A View From
  Variational Analysis}.
\newblock Springer, 2009.

\bibitem[EZOP23]{Elamvazhuthi_2023}
K.~Elamvazhuthi, X.~Zhang, S.~Oymak, and F.~Pasqualetti.
\newblock Learning on manifolds: Universal approximations properties using
  geometric controllability conditions for neural {ODEs}.
\newblock In {\em Proceedings of The 5th Annual Learning for Dynamics and
  Control Conference}, pages 1--11, 2023.

\bibitem[Fed59]{Federer_1959}
H.~Federer.
\newblock Curvature measures.
\newblock {\em Transactions of the American Mathematical Society}, 93(418-491),
  1959.

\bibitem[FMN16]{Fefferman_2016}
C.~Fefferman, S.~Mitter, and H.~Narayanan.
\newblock Testing the manifold hypothesis.
\newblock {\em Journal of the American Mathematical Society}, 29(4):983--1049,
  2016.

\bibitem[Gro91]{Gronau_1991}
D.~Gronau.
\newblock On the structure of the solutions of the {Jabotinsky} equations in
  {Banach} spaces.
\newblock {\em Zeitschrift f\"ur Analysis und ihre Anwendungen},
  10(3):335--343, 1991.

\bibitem[HK77]{Hermann_1977}
R.~Hermann and A.~J. Krener.
\newblock Nonlinear controllability and observability.
\newblock {\em IEEE Transactions on Automatic Control}, {AC-22}(5):728--740,
  1977.

\bibitem[HPR11]{Hindawi_2011}
A.~Hindawi, J.-B. Pomet, and L.~Rifford.
\newblock Mass transportation with {LQ} cost functions.
\newblock {\em Acta Applicandae Mathematicae}, 113(2):215--229, 2011.

\bibitem[KL09]{Khesin_2009}
B.~Khesin and P.~Lee.
\newblock A nonholonomic {Moser} theorem and optimal transport.
\newblock {\em Journal of Symplectic Geometry}, 7(4):381--414, 2009.

\bibitem[Mac92]{Mackey_1992}
M.~C. Mackey.
\newblock {\em Time's Arrow: The Origins of Thermodynamic Behavior}.
\newblock Springer, 1992.

\bibitem[Mos65]{Moser_1965}
J.~Moser.
\newblock On the volume elements on a manifold.
\newblock {\em Transactions of the American Mathematical Society},
  120:286--294, 1965.

\bibitem[Pog16]{Pogodaev_2016}
N.~Pogodaev.
\newblock Optimal control of continuity equations.
\newblock {\em Nonlinear Differential Equations and Applications NoDEA},
  23(2):21, 2016.

\bibitem[PR13]{Piccoli_2013}
B.~Piccoli and F.~Rossi.
\newblock Transport equation with nonlocal velocity in {Wasserstein} spaces:
  convergence of numerical schemes.
\newblock {\em Acta Applicandae Mathematicae}, 124:73--105, 2013.

\bibitem[PS70]{Palis_1970}
J.~Palis and S.~Smale.
\newblock Structural stability theorems.
\newblock {\em Proceedings of Symposia in Pure Mathematics}, 14:223--231, 1970.

\bibitem[RW98]{Rockafellar_1998}
R.~T. Rockafellar and R.~J.-B. Wets.
\newblock {\em Variational Analysis}.
\newblock Springer, 1998.

\bibitem[Sus73]{sussmann_1973}
H.~J. Sussmann.
\newblock Orbits of families of vector fields and integrability of
  distributions.
\newblock {\em Transactions of the American Mathematical Society},
  180:171--188, 1973.

\bibitem[Vil03]{Villani_2003}
C.~Villani.
\newblock {\em Topics in Optimal Transportation}.
\newblock American Mathematical Society, 2003.

\end{thebibliography}

\end{document}